\begin{document}

\begin{center}
	\begin{Large}{Optimal Domination Polynomials}\end{Large}\\
	
	\vspace{.25in}
	
	 I. Beaton$^{1}$, J.I. Brown$^{1}$ and D. Cox$^{2}$\\
	 
	 	\vspace{.15in}
	 \begin{small}
	  $^{1}$Department of Mathematics and Statistics, Dalhousie University, Halifax, CANADA\\
	 $^{2}$Department of Mathematics,  Mount Saint Vincent University, Halifax, CANADA. Corresponding Author: danielle.cox@msvu.ca\\
		\end{small}

\end{center}

\begin{abstract}
Let $G$ be a graph on $n$ vertices and $m$ edges and $D(G,x)$ the domination polynomial of $G$. In this paper we completely characterize the values of $n$ and $m$ for which optimal graphs exist for domination polynomials. We also show that there does not always exist least optimal graphs for the domination polynomial. Applications to network reliability are highlighted.
\end{abstract}

Keywords:  domination polynomial, optimality,  reliability


\newtheorem{theorem}{Theorem}[section]
\newtheorem{lemma}[theorem]{Lemma}
\newtheorem{definition}[theorem]{Definition}
\newtheorem{conjecture}[theorem]{Conjecture}
\newtheorem{proposition}[theorem]{Proposition}
\newtheorem{corollary}[theorem]{Corollary}
\newtheorem{problem}[theorem]{Problem}
\newtheorem{observation}[theorem]{Observation}
\newtheorem{case}{Case}
\newenvironment{proof}{\paragraph{\textbf{Proof}}}
 {\nopagebreak\hfill\nopagebreak\rule{2mm}{2mm}\par\bigskip}
\newcommand*{\QED}{\nopagebreak\hfill\nopagebreak\rule{2mm}{2mm}\par\bigskip}%
\newcommand{\dc}[1]{{\textcolor{blue}{ [\textbf{DC:}}\  \textcolor{blue}{{\em #1}]}}}

\newcommand{\ceil}[1]{\lceil #1 \rceil}
\newcommand{\floor}[1]{\lfloor #1\rfloor}


\section{Introduction}

Consider a graph $G$ with vertex set $V(G)$ and edge set $E(G)$ (we assume throughout that all graphs are {\em simple}, that is, without loops and multiple edges, as neither of these affect domination). Let $S$ be a subset of vertices or edges such that $S$ has a particular graph property, $P$. Perhaps $P$ is that $S$ is independent, complete, a dominating set or a matching. 
The sequences of the number of sets of varying cardinality that have property $P$ have also been studied, particularly through the associated generating polynomials (which are {\em graph polynomials}). Independence, clique, dominating and matching polynomials have all arisen and been studied in this setting.
The evaluation of these polynomials at $1$ yields the counts of the number of subsets in question,  important graph invariants, and all of the graph polynomials can all be considered as functions on the domain $[0,\infty)$. 

If the number of vertices $n$ and edges $m$ are fixed, one can ask whether there exists {\em optimal} graphs with respect to a property, in the following sense. Let $\mathcal{G}_{n,m}$ denote the set of (simple) graphs of order $n$ and size $m$ (that is, with $n$ vertices and $m$ edges). A graph $H\in \mathcal{G}_{n,m}$ is {\em optimal} if $f(H,x) \geq f(G,x)$ for all graphs $G\in \mathcal{G}_{n,m}$ and {\em all} $x \geq 0$ (for any particular value of $x \geq 0$, of course, there is such a graph $H$, as the number of graphs of order $n$ and size $m$ is finite, but we are interested in {\em uniformly} optimal graphs). Of course, if there is a graph $H$ such that the counts for the associated property sets are each greater than or equal to that for any other graph of the same order and size, that graph will be optimal.

Optimality has been studied for independence polynomials \cite{coxbrownindp}, as well for other graph polynomials such as network reliability over the domain $[0,1]$ \cite{ath,suffel,boesch91,browncox,gross,myrvold} and  chromatic polynomials \cite{sakaloglu,chromatic}. 
In this paper we will investigate optimality of domination polynomials. Let $G$ be a graph of order $n$ and size $m$. A subset of vertices is called {\em dominating} if every vertex of $G$ is either in $S$ or adjacent to a vertex of $S$. The cardinality of the smallest dominating set is the domination number, written $\gamma(G)$. We define the {\em domination polynomial} of $G$ as  
\[D(G,x) = \sum_{i=1}^{|V(G)|} d(G,i)x^i = \sum_{i=\gamma (G)}^{|V(G)|} d(G,i)x^i.\] 
where  $d(G,i)$ is the number of dominating sets of cardinality $i$.
We will completely characterize the values of $n$ and $m$ for which optimal graphs exist (which contrasts sharply with the other graph polynomials mentioned where only partial results are known).


\section{Optimality for Domination Polynomials}
\label{sec:Opt}

We begin our study with a useful observation that compares the coefficients of the domination polynomials of two graphs to determine which is more optimal for arbitrarily large and small values of $x$. 

\subsection{Optimal Graphs for Domination Polynomials}
\label{sec:OptGreat}

\begin{observation}
\label{obs:Coeff}
Suppose that $G$ and $H$ are graphs on $n$ vertices and $m$ edges with

$$D(G,x) = \sum_{j=1}^{|V(G)|} d(G,j)x^j$$

\noindent and

$$D(H,x) = \sum_{j=1}^{|V(G)|} d(H,j)x^j$$

\noindent Then

\begin{itemize}
\item if $d(G,j) = d(H,j)$ for $j < l$ but $d(G,l) > d(H,l)$, then $D(G,x) > D(H,x)$ for $x$ arbitrary small positive values of $x$ and

\item if $d(G,j) = d(H,j)$ for $t > j$ but $d(G,t) > d(H,t)$, then $D(G,x) > D(H,x)$ for $x$ arbitrary large.
\end{itemize}
\end{observation}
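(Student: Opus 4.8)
The plan is to reduce everything to an elementary sign analysis of the difference polynomial $\Delta(x) := D(G,x) - D(H,x) = \sum_{j}\bigl(d(G,j) - d(H,j)\bigr)x^j$, factoring out the lowest surviving power of $x$ for the first claim and the highest surviving power of $x$ for the second.

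For the first bullet: since $d(G,j) = d(H,j)$ for every $j < l$, all terms of $\Delta$ of degree below $l$ cancel, so $\Delta(x) = x^l\bigl[\,(d(G,l)-d(H,l)) + \sum_{j > l}(d(G,j)-d(H,j))x^{j-l}\,\bigr]$. The bracketed factor is a polynomial in $x$ whose value at $x = 0$ is $d(G,l)-d(H,l) > 0$, so by continuity it remains positive on some interval $(0,\varepsilon)$; since $x^l > 0$ for $x > 0$, we obtain $\Delta(x) > 0$, i.e.\ $D(G,x) > D(H,x)$, for all $x \in (0,\varepsilon)$, which is the assertion for arbitrarily small positive $x$.

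For the second bullet, I would argue symmetrically at infinity: since $d(G,j) = d(H,j)$ for every $j > t$, all terms of $\Delta$ of degree above $t$ cancel, so $\Delta(x) = x^t\bigl[\,(d(G,t)-d(H,t)) + \sum_{j < t}(d(G,j)-d(H,j))x^{j-t}\,\bigr]$. Every exponent $j-t$ in the remaining sum is negative, so that sum tends to $0$ as $x \to \infty$ and the bracketed factor tends to $d(G,t)-d(H,t) > 0$; hence the bracket, and with it $\Delta(x)$, is positive for all sufficiently large $x$, which gives $D(G,x) > D(H,x)$ there.

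I do not expect any genuine obstacle here: the one point worth stating carefully is that ``for arbitrarily small positive $x$'' and ``for $x$ arbitrarily large'' should be read as ``for all $x$ in a one-sided neighbourhood of $0$'' and ``for all $x$ past some threshold'' respectively, and both are exactly what the two limiting arguments produce. No use is made of the finiteness of $\mathcal{G}_{n,m}$ beyond the fact that $D(G,x)$ and $D(H,x)$ have finite degree, and the hypothesis that $G$ and $H$ share the same order and size plays no role in the argument itself.
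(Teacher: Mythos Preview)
Your argument is correct and is the standard one: factor $\Delta(x)=D(G,x)-D(H,x)$ by its lowest (respectively highest) surviving power of $x$ and use continuity of the remaining factor at $0$ (respectively its limit as $x\to\infty$). The paper does not actually supply a proof of this statement---it is recorded as an observation and left to the reader---so there is nothing to compare against; your write-up simply fills in the routine details the authors omitted.
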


Given two graphs $G$ and $H$, if $D(G,x)>D(H,x)$ for $x> 0$ then we say that $G$ is more optimal than $H$.

\vspace{0.2in}

Our first result will be regarding the existence of optimal sparse graphs. The following lemma describes an operation that uniformly increases the domination polynomial on $[0,\infty)$.

\begin{lemma}\label{optoperation}
Let $G$ be a graph on $n\geq 3$ vertices with at least one isolated vertex $x$ and at least one edge $e=uv$. Let $H$ be the graph $(G-e) \cup ux$.
Then 
\[ D(H,x)\geq D(G,x) \mbox{ for } x\geq 0. \]
Moreover, if $v$ has degree at least $2$, then 
\[ D(H,x) > D(G,x) \mbox{ for } x > 0. \]
\end{lemma}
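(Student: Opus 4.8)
The plan is to prove the sharper combinatorial statement that $d(G,i)\le d(H,i)$ for every $i$; since $D(H,x)-D(G,x)=\sum_i\bigl(d(H,i)-d(G,i)\bigr)x^i$ and $x^i\ge 0$ for $x\ge 0$, this immediately gives $D(H,x)\ge D(G,x)$ on $[0,\infty)$, and the ``moreover'' part will follow by exhibiting one coefficient at which the inequality is strict. I will write $w$ for the isolated vertex called $x$ in the statement, to keep it distinct from the polynomial variable. The vertices $u,v,w$ are pairwise distinct (this is where $n\ge 3$ enters), and in $H$ we have $N_H(u)=(N_G(u)\setminus\{v\})\cup\{w\}$, $N_H(v)=N_G(v)\setminus\{u\}$ and $N_H(w)=\{u\}$, while every other vertex has the same closed neighbourhood in $G$ and in $H$. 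Finally, every dominating set of $G$ contains $w$, because $w$ is isolated in $G$.

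To obtain $d(G,i)\le d(H,i)$ I will construct a cardinality-preserving injection $\Phi$ from the dominating sets of $G$ to those of $H$, defined by cases on whether $u$ and $v$ lie in the set $S$. If $u\notin S$ or $\{u,v\}\subseteq S$, set $\Phi(S)=S$; a direct check using the neighbourhood description shows $S$ still dominates $H$: the only vertices whose status can change are $u,v,w$; $w\in S$; $u$ is either in $S$ or else, being not in $S$, is dominated by $w$ via the new edge $uw$; and $v$ is either in $S$ or else $u\notin S$, so every $S$-neighbour of $v$ in $G$ is distinct from $u$ and hence still a neighbour of $v$ in $H$. If $u\in S$ but $v\notin S$, set $\Phi(S)=(S\setminus\{w\})\cup\{v\}$; this has the same cardinality as $S$ (since $w\in S$, $v\notin S$, $w\neq v$), and it dominates $H$ because $u$ and $v$ are now both in the set, $w$ is dominated by $u$, and any other vertex $z\notin S$ that was dominated in $G$ still has a dominating neighbour in $(S\setminus\{w\})\cup\{v\}$, as an $S$-neighbour of $z$ cannot be $w$ (which is isolated).

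The map $\Phi$ is injective because $S$ is recoverable from $\Phi(S)$: if $u\notin\Phi(S)$ then $S$ came from the first case and $S=\Phi(S)$; if $u\in\Phi(S)$ and $w\in\Phi(S)$ then again $S=\Phi(S)$; and if $u\in\Phi(S)$ but $w\notin\Phi(S)$ then $S$ came from the last case and $S=(\Phi(S)\setminus\{v\})\cup\{w\}$. For the strict conclusion, note that every set in the image of $\Phi$ either omits $u$ or contains $v$. Hence, when $\deg_G(v)\ge 2$, the set $T^{\star}=V(G)\setminus\{v\}$ is not in the image of $\Phi$ (it contains $u$ and omits $v$), yet it dominates $H$: its only non-member is $v$, and $v$ has a neighbour in $H$ because $\deg_H(v)=\deg_G(v)-1\ge 1$. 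Since $|T^{\star}|=n-1$ and $\Phi$ restricts to an injection on dominating sets of each fixed size, $d(H,n-1)\ge d(G,n-1)+1$; together with $d(H,i)\ge d(G,i)$ for all $i$ this yields $D(H,x)-D(G,x)\ge x^{\,n-1}>0$ for $x>0$.

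I expect the only genuine difficulty to be the choice of map in the case $u\in S$, $v\notin S$. The naive attempt $S\mapsto(S\setminus\{u\})\cup\{v\}$ fails in general, because $u$ may be the unique $S$-neighbour of vertices other than $v$, so deleting $u$ can destroy domination. Swapping out the isolated vertex $w$ for $v$ instead works precisely because $w$ dominates nothing but itself in $G$ and is dominated by $u$ in $H$, so removing $w$ costs nothing as long as $u$ remains in the set, which it does here.
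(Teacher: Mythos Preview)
Your proof is correct and follows essentially the same approach as the paper: the same case-by-case injection (swapping the isolated vertex out for $v$ when $u\in S$, $v\notin S$), with a fuller injectivity check than the paper gives. The only difference is the witness for strictness---you use $V\setminus\{v\}$ at level $n-1$, while the paper uses $V\setminus\{v,w\}$ at level $n-2$---and both are valid.
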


\begin{proof}
We begin by showing that every dominating set of $G$ of size $i$ corresponds uniquely to a dominating set of $H$ of the same size.

Let $S_i$ be a dominating set of size $i$ of $G$. Note that since $x$ is an isolated vertex, it appears in every dominating set of $G$.
\begin{itemize}
\item Case 1: If both $u$ and $v$ are in $S_i$ then $S_i$ dominates in $H$.
\item Case 2: If $u\in S_i$, $v\not\in S_i$ then $(S_i-x) \cup \{v\}$ is a dominating set of size $i$ in $H$ which does not dominate in $G$. 
\item Case 3: If $u\notin S_i$, $v\in S_i$ then $S_i$ dominates in $H$ as $x \in S_i$ and $u \in N[x]$. 
\item Case 4: If neither $u$ nor $v$ are an element of $S_i$ both $u$ and $v$ must be dominated in $G-e$, and therefore $S_i$ a dominating set of $H$ as well.
\end{itemize}

Thus, every dominating set of size $i$ of $G$ corresponds to a dominating set of $H$ of size $i$. Moreover, it is not hard to verify that the dominating sets of $H$ produced are different. Hence  $d(H,i)\geq d(G,i)$ for $i\geq 1$ and so $D(H,x)\geq D(G,x)$ for $x\geq 0$ as was to be shown.

Moreover, if $v$ has degree at least $2$, it has another vertex $w \neq u$ adjacent to it. Consider the set $S = V(G) - \{v,x\}$. Then $S$ is not a dominating set of $G$ (as it does not contain $x$ but it is a dominating set in $H$). Moreover, it is straightforward to verify that $S$ is not matched up with any dominating set of $G$ above. It follows that $d(H,n-2) > d(G,n-2)$, and so $D(H,x) > D(G,x)$ for $x > 0.$
\end{proof}

We will now apply this lemma to show the following.

\begin{corollary}\label{optiso}
Let $G$ be a graph on $n\geq 3$ vertices and $m \geq \lceil \frac{n}{2} \rceil$ edges. If $G$ has an isolated vertex, then there exists a graph $H$ of same order and size with no isolated vertices such that $D(H,x)> D(G,x)$ for $x > 0$.
\end{corollary}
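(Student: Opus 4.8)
The plan is to iterate Lemma~\ref{optoperation}. Each application of that lemma performs an ``edge‑for‑pendant'' swap that strictly increases the domination polynomial on $(0,\infty)$ (provided the strong hypothesis holds), so it suffices to check two things: that the strong form of the lemma is always available under the hypothesis $m\ge\lceil n/2\rceil$, and that each swap genuinely reduces the number of isolated vertices rather than merely relocating one.

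First I would isolate the one non‑routine point as a counting fact: \emph{if a graph on $n$ vertices with $m\ge\lceil n/2\rceil$ edges has an isolated vertex, then it has a vertex of degree at least $2$.} Indeed, if every vertex had degree at most $1$, the graph would be a disjoint union of $m$ independent edges together with $n-2m$ isolated vertices; the presence of an isolated vertex forces $n-2m\ge 1$, i.e.\ $m\le (n-1)/2<\lceil n/2\rceil$, contradicting the hypothesis. This is precisely where the bound $m\ge\lceil n/2\rceil$ enters, and I expect it to be the only real obstacle in the argument.

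Now the induction. Suppose $G=G_0$ has $k\ge 1$ isolated vertices. Assume $G_i$ ($0\le i<k$) has $n$ vertices, $m\ge\lceil n/2\rceil$ edges, and an isolated vertex $z$. By the counting fact there is a vertex $v$ with $\deg_{G_i}(v)\ge 2$; choose any neighbour $u$ of $v$ and set $e=uv$ (note $u\ne z$ and $v\ne z$ since $z$ is isolated, and $uz\notin E(G_i)$, so all the hypotheses of Lemma~\ref{optoperation} hold with $z$ playing the role of the isolated vertex). Applying the strong form of Lemma~\ref{optoperation} (legitimate since $\deg_{G_i}(v)\ge 2$), the graph $G_{i+1}:=(G_i-e)\cup uz$ satisfies $D(G_{i+1},x)>D(G_i,x)$ for all $x>0$. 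Moreover the swap deletes $uv$ and adds $uz$: now $z$ has degree $1$, $u$ retains degree $\deg_{G_i}(u)\ge 1$, $v$ has degree $\deg_{G_i}(v)-1\ge 1$, and every other degree is unchanged, so the isolated vertices of $G_{i+1}$ are exactly those of $G_i$ with $z$ removed. Hence $G_{i+1}$ has one fewer isolated vertex than $G_i$, while keeping $n$ and $m$ fixed (and $n\ge 3$ throughout).

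After $k$ steps we reach a graph $G_k$ on $n$ vertices with $m$ edges and no isolated vertices, and by transitivity of strict inequality $D(G_k,x)>D(G_{k-1},x)>\cdots>D(G_0,x)=D(G,x)$ for every $x>0$. Taking $H=G_k$ gives the corollary. Beyond the counting fact, the remaining work is just the bookkeeping in the previous paragraph confirming that each invocation of Lemma~\ref{optoperation} is valid (simple graph preserved, endpoints distinct from $z$) and that the isolated‑vertex count strictly decreases; I would present that explicitly but not belabour it.
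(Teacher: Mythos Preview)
Your proposal is correct and follows essentially the same approach as the paper: both establish the counting fact that the presence of an isolated vertex together with $m\ge\lceil n/2\rceil$ forces a vertex of degree at least $2$, and then iterate the strict form of Lemma~\ref{optoperation} to eliminate isolated vertices one at a time. Your write-up is in fact a bit more careful than the paper's in re-verifying the hypotheses at each step and in tracking the isolated-vertex count through the swap.
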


\begin{proof}
Let $G'$ be the graph such that $G=G' \cup rK_1$ where $r\geq 1$ is the number of isolated vertices in $G$. Then $G'$ has $n-r$ vertices and $m \geq \lceil \frac{n}{2} \rceil$ edges. We will now show $\Delta(G') \geq 2$. Suppose not -- that is, suppose $\Delta(G') < 2$. Then the sum of all the degrees of vertices in $G'$ is at most $n-r$. Furthermore, 

$$n-r \geq \sum_{v \in G'} \deg (v) = 2m \geq 2\left\lceil \frac{n}{2} \right\rceil\geq n.$$

This is a contradiction as $r\geq 1$. Thus there indeed exists a vertex $v \in G'$ with degree two or more. Let $u \in N(v)$ and $H$ be the graph constructed in Lemma \ref{optoperation} by removing the edge $uv$ from $G$ and adding an edge from $u$ to an isolated vertex. By Lemma \ref{optoperation}, $D(H,x) > D(G,x)$ for $x > 0$ and $H$ has one less isolated vertex. Hence by iterating this process we will find a graph with no isolated vertices which is more optimal than $G$.
\end{proof}

Using the previous result, we can now prove that optimal sparse graphs exist. Two non-isomorphic graphs can have the same domination polynomial, thus for a fixed $n$ and $m$, it is possible for two graphs from $\mathcal{G}_{n,m}$ to both be optimal. If $G \in \mathcal{G}_{n,m}$ is the only optimal graph in $\mathcal{G}_{n,m}$ we call it the \emph{unique optimal} graph.

\begin{corollary}
\label{sparesopt}
For a given $n \geq 2$ and $ m= \lceil \frac{n}{2} \rceil$, the unique optimal graph is $mK_2$ if $n$ is even and $(m-2)K_2 \cup K_{1,2}$ if $n$ is odd.
\end{corollary}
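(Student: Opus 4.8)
The plan is to work entirely within the sparse regime $m = \lceil n/2 \rceil$, where every graph on $n$ vertices with $m$ edges is extremely constrained: its components must each be small, and the total number of edges barely suffices to cover the vertices. First I would invoke Corollary~\ref{optiso}: since $m = \lceil n/2 \rceil$, any optimal graph may be assumed to have no isolated vertices (an isolated vertex can be traded for a strictly more optimal graph). With no isolated vertices, the degree sum $2m = 2\lceil n/2\rceil$ forces the graph to be nearly a perfect matching — when $n$ is even, $2m = n$, so every vertex has degree exactly $1$ and the graph \emph{is} $mK_2$; when $n$ is odd, $2m = n+1$, so exactly one extra unit of degree is available, meaning the graph is a disjoint union of $K_2$'s together with exactly one component on three vertices having two edges, i.e.\ a $P_3 = K_{1,2}$, giving $(m-2)K_2 \cup K_{1,2}$.

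This structural analysis already pins down the candidate in each parity class to a single graph, so the remaining task is just to confirm that the forced graph is indeed optimal and that no \emph{other} graph in $\mathcal{G}_{n,m}$ (necessarily one with at least one isolated vertex) can tie it. For the even case there is nothing left: every graph without isolated vertices equals $mK_2$, and every graph with an isolated vertex is strictly beaten by Corollary~\ref{optiso}, so $mK_2$ is the unique optimal graph. For the odd case I would similarly argue that any graph with no isolated vertex is forced to be $(m-2)K_2 \cup K_{1,2}$; the only subtlety is that Corollary~\ref{optiso} says a graph with an isolated vertex is beaten by \emph{some} graph of the same order and size, and I need that graph to ultimately be comparable to (indeed beaten by, or equal to) our candidate. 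Since the iteration in Corollary~\ref{optiso} terminates at a graph with no isolated vertices, and the \emph{only} such graph is $(m-2)K_2 \cup K_{1,2}$, the chain of strict inequalities $D(\cdot,x) > \cdots$ lands exactly on our candidate, so our candidate is strictly more optimal than any graph with an isolated vertex, hence uniquely optimal.

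To make the ``strictly more optimal'' comparison clean I would also want to verify directly, via Observation~\ref{obs:Coeff} or a short computation, that $D((m-2)K_2 \cup K_{1,2}, x) > D((m-1)K_2 \cup K_1, x)$ for $x>0$ — this is the single base-case application of Lemma~\ref{optoperation} (removing an edge of a $K_2$ and re-attaching it to an isolated vertex turns $(m-1)K_2 \cup K_1$ into $(m-2)K_2 \cup K_{1,2}$, and the endpoint $v$ of the removed edge has degree $1 < 2$, so Lemma~\ref{optoperation} only gives $\geq$ in general; but here the two graphs can be compared by hand since $D(G \cup H, x) = D(G,x)D(H,x)$ and $D(K_2,x) = x^2 + 2x$, $D(K_1,x) = x$, $D(K_{1,2},x) = x^3 + 3x^2 + 2x$, so the claim reduces to $x^3 + 3x^2 + 2x > x(x^2+2x) = x^3 + 2x^2$, i.e.\ $x^2 + 2x > 0$, true for $x>0$).

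The main obstacle I anticipate is not any hard inequality but the bookkeeping in the odd case: showing that \emph{every} graph in $\mathcal{G}_{n,m}$ with an isolated vertex is strictly beaten by the specific graph $(m-2)K_2\cup K_{1,2}$ — rather than merely by ``some'' graph — requires arguing that the isolated-vertex-removal process of Corollary~\ref{optiso} is confluent in the sense that its unique terminal graph is our candidate, which in turn rests on the structural claim that $(m-2)K_2 \cup K_{1,2}$ is the \emph{only} isolated-vertex-free graph in $\mathcal{G}_{n,m}$ when $n$ is odd. Once that enumeration is in hand, uniqueness follows immediately, since any two optimal graphs would have to be isolated-vertex-free (else beaten) and hence equal.
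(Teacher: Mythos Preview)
Your core argument is exactly the paper's: invoke Corollary~\ref{optiso} to replace any graph with an isolated vertex by a strictly better one without isolated vertices, then observe via the degree-sum constraint $2m \in \{n,n+1\}$ that there is a \emph{unique} isolated-vertex-free graph in $\mathcal{G}_{n,m}$ in each parity case, which is therefore the unique optimal graph. The paper's proof is literally those two sentences; your second paragraph already finishes it.

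Your third paragraph, however, is both unnecessary and contains errors. First, $(m-1)K_2 \cup K_1$ has only $m-1$ edges, so it is not in $\mathcal{G}_{n,m}$ at all and the comparison is moot. Second, applying the operation of Lemma~\ref{optoperation} to an edge of a $K_2$ and an isolated vertex just relabels the isolated vertex --- you get an isomorphic copy of the same graph, not $(m-2)K_2 \cup K_{1,2}$. Third, $D(K_{1,2},x) = x^3 + 3x^2 + x$ (only the center vertex dominates alone), not $x^3 + 3x^2 + 2x$. None of this damages the actual proof: Corollary~\ref{optiso} already hands you a graph with no isolated vertices that strictly beats $G$, and your degree-sum argument identifies that graph uniquely, so the ``confluence'' worry in your final paragraph is also superfluous. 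Delete the third and fourth paragraphs and you have precisely the paper's proof.
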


\begin{proof}
Let $G$ be a graph on $n$ vertices and $m= \lceil \frac{n}{2} \rceil$ edges. 
By Corollary~\ref{optiso}, if $G$ has an isolated vertex, there exists a graph $H$ with $n$ vertices, $m$ edges, and no isolated vertices which is more optimal than $G$. Depending on parity of $n$, as $m= \lceil \frac{n}{2} \rceil$ there is only one graph with no isolated vertices: $mK_2$ if $n$ is even and $(m-2)K_2 \cup K_{1,2}$ if $n$ is odd. Hence these graphs must be the unique optimal graphs in their class $\mathcal{G}_{n,m}$.
\end{proof}

\begin{theorem}~\label{sparsmost}
Fix $m\geq 1$ and let $n=2m+r$, $r\geq 0$. Then the unique optimal graph is $m K_2 \cup r K_1$. That is, for $n \geq 2$ and $m < \left\lceil \frac{n}{2} \right\rceil$ a unique optimal graph exists.
\end{theorem}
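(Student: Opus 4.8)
The plan is to reduce everything to iterated applications of Lemma~\ref{optoperation}, exploiting the fact that in the range $n=2m+r$ every graph of order $n$ and size $m$ has at least $r$ isolated vertices, with equality forcing the graph to be $mK_2\cup rK_1$.

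First I would record the relevant structure of $\mathcal{G}_{n,m}$. If $G\in\mathcal{G}_{n,m}$ has exactly $r'$ isolated vertices, deleting them leaves a graph on $n-r'$ vertices carrying all $m$ edges and having minimum degree at least $1$, so $2m=\sum_v\deg(v)\ge n-r'$, i.e.\ $r'\ge n-2m=r$; hence \emph{every} graph in $\mathcal{G}_{n,m}$ has at least $r$ isolated vertices. If moreover $r'=r$, the graph $G'$ obtained by deleting the isolated vertices has $n-r=2m$ vertices, $m$ edges and minimum degree at least $1$, so $2m=\sum_{v\in G'}\deg(v)\ge 2m$ with equality, forcing every vertex of $G'$ to have degree $1$; thus $G'=mK_2$ and $G=mK_2\cup rK_1$. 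So $mK_2\cup rK_1$ is the unique member of $\mathcal{G}_{n,m}$ attaining the minimum number $r$ of isolated vertices.

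Next, take any $G\in\mathcal{G}_{n,m}$ with $G\not\cong mK_2\cup rK_1$, and assume $n\ge 3$. Then $G$ has some number $r'>r$ of isolated vertices, in particular at least one, while its non-isolated part has $n-r'<2m$ vertices yet still carries all $m$ edges; since $2m=\sum\deg(v)$ exceeds the number of non-isolated vertices, some non-isolated vertex $v$ has $\deg(v)\ge 2$. Choosing $u\in N(v)$ and an isolated vertex $z$, Lemma~\ref{optoperation} applied to $e=uv$ produces $H=(G-e)\cup uz\in\mathcal{G}_{n,m}$ with $D(H,x)>D(G,x)$ for $x>0$; a quick degree check shows $z$ becomes non-isolated while $u$ and $v$ remain non-isolated, so $H$ has exactly one fewer isolated vertex than $G$. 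Iterating, the number of isolated vertices strictly decreases while staying at least $r$, so after finitely many steps we reach a graph with exactly $r$ isolated vertices, which by the previous paragraph is $mK_2\cup rK_1$; chaining the strict inequalities gives $D(mK_2\cup rK_1,x)>D(G,x)$ for all $x>0$.

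Finally I would assemble the conclusion: for every $G\in\mathcal{G}_{n,m}$ one has $D(mK_2\cup rK_1,x)\ge D(G,x)$ on $[0,\infty)$, strict on $(0,\infty)$ unless $G\cong mK_2\cup rK_1$, so $mK_2\cup rK_1$ is optimal and, being strictly better than every other graph at $x=1$, is the unique optimal graph; the only remaining case $n=2$ forces $m=1$, $r=0$ and $\mathcal{G}_{2,1}=\{K_2\}$, so it is immediate. The step I expect to need the most care is confirming that the move of Lemma~\ref{optoperation} is always available at a vertex of degree at least $2$ (so that the \emph{strict} form of the lemma applies) and that it genuinely decreases the isolated-vertex count by one, so that the induction terminates exactly at $mK_2\cup rK_1$ and not sooner.
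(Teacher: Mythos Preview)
Your proof is correct but proceeds along a different route from the paper. The paper inducts on $r$: the base case $r=0$ is Corollary~\ref{sparesopt}, and for the inductive step it simply peels off one isolated vertex using the identity $D(H'\cup K_1,x)=xD(H',x)$, reducing immediately to the inductive hypothesis for $r-1$. You instead work entirely through Lemma~\ref{optoperation}: you first characterize $mK_2\cup rK_1$ as the unique member of $\mathcal{G}_{n,m}$ attaining the minimum possible number $r$ of isolated vertices, and then, for any other $G$, repeatedly apply the \emph{strict} form of Lemma~\ref{optoperation} (available because the non-isolated part has fewer than $2m$ vertices and so cannot be $1$-regular), each application strictly increasing the domination polynomial while decreasing the isolated-vertex count by exactly one, terminating precisely at $mK_2\cup rK_1$. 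Your argument is self-contained and does not need Corollaries~\ref{optiso} or~\ref{sparesopt} as intermediate results; the paper's route is shorter once those corollaries are in hand and leans on the clean multiplicative behaviour of $D$ under adjoining isolated vertices rather than on tracking degrees through each edge move.
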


\begin{proof}
We will induct on $r$. When $r=0$, we know by Corollary~\ref{sparesopt} that $mK_2$ is the unique optimal graph. Suppose that $G_{m,r}=m K_2 \cup r K_1$ is optimal for $1\leq r \leq t$. We will show that for $r=t+1$ that  $m K_2 \cup (t+1)K_1$ is the unique optimal graph.

Let $H$ be a graph on $m$ edges and $n=2m+(t+1)$ vertices. The graph $H$ has at least one isolated vertex. Let $H=H^{\prime}\cup K_1$. Then $D(H,x)=xD(H^{\prime},x)$. Now for $x > 0$, if $H^{\prime} $ is not isomorphic to $G_{m,r-1}$, then for $x > 0$,
\[ D(G_{m,r},x) =xD(G_{m,r-1},x) > xD(H^{\prime},x) = D(H,x),\]
and we are done.
\end{proof}

To contrast, we will now show that optimal graphs need not exist. To do so, we will need the following lemmas regarding the minimum degree of $G$.


\begin{lemma}
\label{lem:MinDeg}
\textnormal{\cite{2010Char}} Let $G$ be a graph of order $n$ then  \[d(G,n-j) = {n \choose j} \mbox{ for all } j \leq \delta (G)\]
\QED
\end{lemma}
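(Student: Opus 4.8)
The plan is to count dominating sets of size $n-j$ via their complements. A set $S \subseteq V(G)$ with $|S| = n-j$ is precisely the complement $V(G) \setminus T$ of a set $T$ with $|T| = j$, and the map $S \mapsto V(G)\setminus S$ is a bijection from the $(n-j)$-subsets onto the $j$-subsets of $V(G)$. Since there are $\binom{n}{j}$ subsets of size $j$, this immediately gives the upper bound $d(G,n-j) \le \binom{n}{j}$ for free, so the real content of the lemma is to show that when $j \le \delta(G)$ \emph{every} complement $V(G)\setminus T$ is in fact a dominating set.

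To establish this, I would fix a set $T \subseteq V(G)$ with $|T| = j \le \delta(G)$, put $S = V(G)\setminus T$, and check the domination condition at an arbitrary vertex $v$. If $v \notin T$ then $v \in S$ and there is nothing to prove. If $v \in T$, then $v$ has at least $\delta(G) \ge j$ neighbours, of which at most $|T \setminus \{v\}| = j-1$ can lie in $T$; hence $v$ has at least $\delta(G) - (j-1) \ge 1$ neighbours in $S$, so $v$ is adjacent to a vertex of $S$. Either way $S$ is a dominating set.

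Combining the two directions, every $(n-j)$-subset of $V(G)$ is a dominating set and there are exactly $\binom{n}{j}$ of them, so $d(G,n-j) = \binom{n}{j}$. I do not anticipate any genuine obstacle here; the only delicate points are the boundary case $j = 0$ (where the statement merely asserts that $V(G)$ is the unique dominating set of order $n$) and the inequality $\delta(G) - (j-1) \ge 1$, which is exactly the step that uses the hypothesis $j \le \delta(G)$ and shows why that bound on $j$ cannot be relaxed.
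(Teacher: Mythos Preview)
Your argument is correct and complete. The paper does not actually prove this lemma: it is quoted from \cite{2010Char} and marked with a bare \textsc{qed}, so there is no ``paper's own proof'' to compare against. Your complement-counting argument is exactly the standard one and would serve as a self-contained proof if one were desired.
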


\begin{lemma}  \label{Lem:d(G,n-d-1)}
Let $G$ be a graph with $n$ vertices. Then 

$$d(G,n-\delta(G)-1)={n \choose \delta(G)+1} - |\{N[v]:\deg (v)=\delta(G)\}|.$$
\end{lemma}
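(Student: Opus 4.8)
The quantity $d(G, n-\delta-1)$ counts dominating sets of size $n-\delta-1$, equivalently (by complementation) subsets $T \subseteq V(G)$ with $|T| = \delta+1$ whose complement $V(G) \setminus T$ is dominating. So the plan is to count the $(\delta+1)$-subsets $T$ for which $V \setminus T$ fails to dominate, and subtract from $\binom{n}{\delta+1}$. A set $S = V \setminus T$ fails to dominate precisely when some vertex $v$ has $N[v] \subseteq T$, i.e. $v$ and all its neighbours lie in $T$. Since $|T| = \delta+1$ and every vertex has $|N[v]| \geq \delta+1$, the containment $N[v] \subseteq T$ forces $|N[v]| = \delta+1$, hence $\deg(v) = \delta$, and moreover $T = N[v]$ exactly. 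Thus the "bad" sets $T$ of size $\delta+1$ are exactly the sets of the form $N[v]$ where $\deg(v) = \delta(G)$.

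Carrying this out: first set up the complementation bijection and note $d(G,n-\delta-1) = \binom{n}{\delta+1} - B$, where $B$ is the number of $(\delta+1)$-subsets $T$ with $V\setminus T$ not dominating. Second, show $V \setminus T$ is not dominating iff $N[v]\subseteq T$ for some $v$; this is immediate from the definition of domination. Third, use the degree bound $|N[v]|\geq \delta+1 = |T|$ to conclude that $N[v]\subseteq T$ forces $T = N[v]$ and $\deg(v)=\delta$. Fourth, conclude that the collection of bad sets is exactly $\{ N[v] : \deg(v) = \delta(G)\}$ as a set of subsets (distinct closed neighbourhoods give distinct bad $T$, and equal ones are counted once), so $B = |\{N[v] : \deg(v) = \delta(G)\}|$, giving the claimed formula.

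The only subtlety — the point most deserving of care rather than genuine difficulty — is the bookkeeping on multiplicity: different vertices $v \neq w$ of degree $\delta$ may have $N[v] = N[w]$, and then they contribute the same bad set $T$. This is precisely why the statement is phrased with $|\{N[v] : \deg(v) = \delta(G)\}|$, a cardinality of a set of closed neighbourhoods, rather than with the number of minimum-degree vertices. So I would be explicit that $B$ equals the number of \emph{distinct} sets of the form $N[v]$ with $\deg(v)=\delta(G)$, matching the right-hand side verbatim. (As a sanity check, when $\delta = \delta(G)$ this specializes consistently with Lemma \ref{lem:MinDeg}'s boundary behaviour, and for a $\delta$-regular graph with all closed neighbourhoods distinct it gives $\binom{n}{\delta+1} - n$.) No results beyond the definition of dominating set and the trivial degree bound are needed.
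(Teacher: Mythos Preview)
Your proposal is correct and follows essentially the same argument as the paper: both count the non-dominating subsets of size $n-\delta(G)-1$ and identify them (via complementation) with the distinct closed neighbourhoods $N[v]$ of minimum-degree vertices. Your write-up is in fact more explicit than the paper's about the multiplicity issue (different $v$ with the same $N[v]$), which is a nice touch.
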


\begin{proof}
Clearly ${n \choose \delta(G)+1}-d(G,n-\delta(G)-1)$ counts the largest subsets of $V$ which do not dominate $G$. A subset $S \subseteq V$ is a dominating set if and only if for every vertex $v \in V$, $N[v] \cap S \neq \emptyset$. Therefore 
the maximum non-dominating subsets of $V$ are $\{V-N[v]:\deg (v)=\delta(G)\}$. As $|\{V-N[v]:\deg (v)=\delta(G)\}|=|\{N[v]:\deg (v)=\delta(G)\}|$ we get our result.
\end{proof}

\begin{theorem}
\label{thm:0.5nton}
Let $\lceil \frac{n}{2} \rceil <m \leq n-1$. Then for $n\geq 4$ an optimal graph does not exist of order $n$ and size $m$.
\end{theorem}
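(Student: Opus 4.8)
The plan is to show that an optimal graph would have to be simultaneously ``best for small $x$'' and ``best for large $x$'', and that these two requirements are incompatible when $\lceil n/2\rceil<m\le n-1$ and $n\ge 4$; Observation~\ref{obs:Coeff} is the bridge between the two. First I would dispose of isolated vertices: since $m\ge\lceil n/2\rceil$, Corollary~\ref{optiso} shows that any graph with an isolated vertex is beaten on $(0,\infty)$, so an optimal graph $G^{\ast}$ (if one exists) has no isolated vertex. For such a graph the degree sum is $2m\le 2n-2<2n$, which forces $\delta(G^{\ast})=1$. Hence Lemma~\ref{lem:MinDeg} pins down the two highest coefficients, $d(G^{\ast},n)=1$ and $d(G^{\ast},n-1)=n$, and Lemma~\ref{Lem:d(G,n-d-1)} gives $d(G^{\ast},n-2)={n\choose 2}-f(G^{\ast})$, where $f(G):=|\{N[v]:\deg(v)=1\}|$. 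So for large $x$ the first coefficient that can distinguish $G^{\ast}$ from another isolated-vertex-free graph of order $n$ and size $m$ is $d(\cdot,n-2)$, equivalently the quantity $f(\cdot)$, which we want to be as \emph{small} as possible.

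For the small-$x$ side I would first show that every isolated-vertex-free $G$ of order $n$ and size $m$ satisfies $\gamma(G)\ge n-m$: if $S$ is a minimum dominating set, then each of the $n-\gamma(G)$ vertices outside $S$ has an edge into $S$, and these edges have distinct endpoints outside $S$, so $m\ge n-\gamma(G)$. Conversely a disjoint union of $n-m$ stars, with the $m$ edges distributed so that every star is nontrivial (possible because $2m>n$), lies in $\mathcal{G}_{n,m}$, has no isolated vertex, and has domination number exactly $n-m$. Comparing $G^{\ast}$ with this union of stars via Observation~\ref{obs:Coeff}: if $\gamma(G^{\ast})>n-m$ then at the $x^{n-m}$ coefficient the union of stars is positive while $G^{\ast}$ is zero, so the union of stars beats $G^{\ast}$ for small $x$, a contradiction. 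Hence $\gamma(G^{\ast})=n-m$.

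For the large-$x$ side I would compute $f_{\min}:=\min\{f(G):G\in\mathcal{G}_{n,m},\ G\text{ has no isolated vertex}\}$, so that the best possible value of $d(\cdot,n-2)$ is ${n\choose 2}-f_{\min}$. Two degree-$1$ vertices have equal closed neighbourhood exactly when they form a $K_2$-component, so if $q$ is the number of $K_2$-components and $p$ the number of the remaining degree-$1$ vertices, then $f(G)=p+q$ while the number of degree-$1$ vertices is $2q+p$; since the degree sum is $2m$ this gives $2q+p\ge 2(n-m)$, and then $2f(G)\ge f(G)+q=p+2q\ge 2(n-m)$, so $f(G)\ge n-m$. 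Equality forces $q=n-m$, $p=0$, and every vertex of degree at least $2$ having degree exactly $2$ --- that is, $G=(n-m)K_2\cup R$ with $R$ a $2$-regular graph on the remaining $2m-n$ vertices. Such an $R$ (a disjoint union of cycles) exists exactly when $2m-n\ge 3$; in the single exceptional case $2m-n=2$ (i.e. $n$ even and $m=\tfrac n2+1$) one gets instead $f_{\min}=n-m+1$, attained only by $\tfrac{n-4}{2}K_2\cup P_4$. In every case each extremal graph consists of $n-m$ (or $n-m-1$) copies of $K_2$ together with a nonempty part ($2$-regular, or $P_4$) whose domination number is at least $1$, so its domination number is at least $n-m+1$.

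Putting the pieces together: an optimal graph $G^{\ast}$ must have $\gamma(G^{\ast})=n-m$ by the small-$x$ analysis, and must achieve $d(G^{\ast},n-2)={n\choose 2}-f_{\min}$ by the large-$x$ analysis (Observation~\ref{obs:Coeff} applied with an extremal graph $G_2$: $d(\cdot,n)$ and $d(\cdot,n-1)$ already agree, so if $d(G^{\ast},n-2)$ were smaller, $G_2$ would beat $G^{\ast}$ for large $x$). Hence $G^{\ast}$ must be one of the extremal graphs identified above, each of which has domination number strictly greater than $n-m$ --- contradicting $\gamma(G^{\ast})=n-m$. Therefore no optimal graph of order $n$ and size $m$ exists. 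I expect the delicate step to be the exact determination of $f_{\min}$ and of \emph{all} graphs attaining it: in particular handling the parity-exceptional value $2m=n+2$ cleanly and confirming that every extremal graph, not merely an obvious one, has domination number exceeding $n-m$. Once that structural fact is secured, the rest is routine bookkeeping with Observation~\ref{obs:Coeff} and Lemmas~\ref{lem:MinDeg}--\ref{Lem:d(G,n-d-1)}.
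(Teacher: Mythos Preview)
Your argument is correct and complete in outline; the delicate step you flag (the exact value of $f_{\min}$ and the classification of all minimizers, including the exceptional case $2m-n=2$) does go through as you describe, and once that is in hand the contradiction is immediate.

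Your route, however, is genuinely different from the paper's. Both proofs start from the same ``small-$x$ versus large-$x$'' tension supplied by Observation~\ref{obs:Coeff}, and both establish $\gamma(G^\ast)=n-m$. From there the two diverge. The paper pushes further on the \emph{small-$x$} side: writing $m=n-r$, it observes that $G^\ast$ must have exactly $r$ components (since $\gamma=r$ and $G^\ast$ has at least $r$ components), hence is a forest of $r$ stars; then, comparing the next coefficient $d(\cdot,r)$ with that of $(r-1)K_2\cup K_{1,n-2r+1}$, it forces $G^\ast$ to equal this specific graph. A single large-$x$ comparison (replace the big star component by a path) then finishes. You instead push on the \emph{large-$x$} side: you determine $f_{\min}$ and \emph{all} graphs attaining it via a degree-sum argument, and then note that each such graph has $\gamma>n-m$, contradicting the small-$x$ requirement. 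What the paper's approach buys is brevity --- no parity case split is needed, and the small-$x$ refinement pins $G^\ast$ down to one graph. What your approach buys is a cleaner structural picture on the large-$x$ side (you identify exactly which graphs maximise $d(\cdot,n-2)$ in $\mathcal G_{n,m}$) and it avoids the somewhat terse endgame in the paper where the star/path comparison for a \emph{component} is invoked via the $P_n$ versus $K_{1,n-1}$ calculation.
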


\begin{proof} To reach a contradiction suppose there exists an optimal graph $G$ with $n$ vertices with $n-r$ edges where $1 \leq r < \lfloor \frac{n}{2} \rfloor$. Consider the domination number of $G$. By Observation \ref{obs:Coeff}, there is no graph with the same order and size of $G$ but of smaller domination number. Let $H=(r-1)K_2 \cup K_{1,n-2r+1}$. As $H$ has $n$ vertices, $n-r$ edges and $\gamma(H)=r$, it follows that $\gamma(G) \leq r$. Furthermore $\gamma(G)$ is bounded below by the number of components in $G$. As $G$ has $n$ vertices and $n-r$ edges, $G$ has at least $r$ components. Therefore $\gamma(G) \geq r$, and so $\gamma(G) = r$. It follows that $G$ must be a disjoint union of $r$ graphs, each with an universal vertex. As $G$ has $n-r$ edges, $G$ must be a forest consisting of $r$ star graphs. 

Again by Observation \ref{obs:Coeff}, there is no graph $F$ with the same order and size of $G$ but with $d(F,r) > d(G,r)$. Let $F=(r-1)K_2 \cup K_{1,n-2r+1}$ and note that $d(F,r)=2^{r-1}$. Thus $d(G,r) \geq 2^{r-1}$. Now $d(G,r)$ is the number of minimum dominating sets in $G$, and thus is equal to the product of the number of minimum dominating sets for each of its $r$ components. However the only star graph with more than one minimum dominating set is $K_2$, which has two. Now $m > \lceil \frac{n}{2} \rceil$ implies $G \not\cong rK_2$, so $G$ has at most $(r-1)$ $K_2$ components. It follows that $n-2r+1 \geq 3$ and $d(G,r) \leq 2^{r-1}$. So $d(G,r) = 2^{r-1}$ and $G \cong F=(r-1)K_2 \cup K_{1,n-2r+1}$ as the last component must also be a star. 

We will now show that a star graph is not optimal, and hence $G$, which has a star component, cannot optimal. Consider $P_n$. By Lemma~\ref{Lem:d(G,n-d-1)}, $d(P_n,n-2)={n \choose 2} - 2$, while  $d(K_{1,n-1},n-2)={n \choose 2} - (n-1)$, and hence $d(P_n,n-2)>d(K_{1,n-1},n-2)$ for $n \geq 4$. Thus by Observation \ref{obs:Coeff}, a star graph is not optimal for $n \geq 4$. This contradiction implies that there is no optimal graph on $n$ vertices and $n-1$ edges for $n \geq 4$.
Thus there cannot exist an optimal graph of order $n$ and size  $\lceil \frac{n}{2} \rceil <m \leq n-1$. 
\end{proof}

Now, we will show that there also does not exist dense graphs that are optimal.

\begin{lemma}
\label{lem:d(g,1)}

\textnormal{\cite{2014Intro}} Let $G$ be a graph of order $n$. Then 
\[d(G,1) = |\{v\in V(G)|deg(v) = n-1\}|. \]
 \QED
\end{lemma}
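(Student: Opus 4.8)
The claimed identity is essentially immediate from the definition of a dominating set, so the "proof" I would give is really just an unwinding of definitions in the smallest nontrivial case. By definition, $d(G,1)$ is the number of dominating sets of $G$ of cardinality exactly one. The plan is simply to identify those sets. A singleton $S=\{v\}$ is dominating if and only if every vertex of $G$ lies in $S$ or is adjacent to a vertex of $S$; since $|S|=1$, this is equivalent to $N[v]=V(G)$, i.e.\ to $v$ being adjacent to each of the remaining $n-1$ vertices, i.e.\ to $\deg(v)=n-1$.

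Hence the size-one dominating sets of $G$ are precisely the sets $\{v\}$ with $\deg(v)=n-1$, and distinct such vertices $v$ plainly give distinct singletons. Counting them therefore yields $d(G,1)=|\{v\in V(G)\mid \deg(v)=n-1\}|$, which is the asserted formula; the same reasoning works verbatim for every $n\geq 1$, so no special cases are required.

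The main point worth noting is that there is no real obstacle here: unlike the lower-degree coefficient identities used elsewhere in the paper (e.g.\ Lemma~\ref{lem:MinDeg} and Lemma~\ref{Lem:d(G,n-d-1)}, which pin down the top-degree coefficients $d(G,n-j)$), this statement concerns the lowest coefficient $d(G,1)$, and a one-element set can dominate only in the degenerate way described above. No case analysis, auxiliary graph construction, or extremal/parity argument is needed — the result follows directly from the definition of a dominating set.
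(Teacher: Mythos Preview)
Your argument is correct: a singleton $\{v\}$ dominates $G$ precisely when $N[v]=V(G)$, i.e.\ when $\deg(v)=n-1$, and this bijection between universal vertices and size-one dominating sets immediately gives the count. Note that the paper does not actually prove this lemma---it is quoted from \cite{2014Intro} and marked with a \textsc{qed} symbol without argument---so there is no ``paper's proof'' to compare against; your unwinding of the definition is exactly the standard justification and is entirely adequate.
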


The {\em join} of two disjoint graphs $G$ and $H$, written $G \vee H$ is the graph formed from there disjoint union by adding in all edges $uv$ where $u$ is a vertex of $G$ and $v$ is a vertex of $H$ (if $G$ and $H$ are not disjoint, one merely uses disjoint isomorphic copies).

\begin{lemma}
\label{lem:graphjoin}

\textnormal{\cite{2014Intro}} Let $G$ be a graph of order $n$. Then
$$D(K_r \vee G,x) = ((1+x)^{r}-1)(1+x)^{n} + D(G_2,x).$$
\QED 
\end{lemma}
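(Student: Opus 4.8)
The plan is to count the dominating sets of $K_r \vee G$ directly, organising them by their trace on the clique side. Write the vertex set as $V(K_r)\cup V(G)$, and for a subset $S$ put $A = S\cap V(K_r)$ and $B = S\cap V(G)$, so that $S = A\cup B$ with $A$ and $B$ chosen independently. Every subset of $V(K_r\vee G)$ has either $A\neq\emptyset$ or $A=\emptyset$, and these two cases partition the power set; I will characterise which sets in each case are dominating and then add the two contributions to $D(K_r\vee G,x)$.

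For the case $A\neq\emptyset$: any vertex $a\in A$ lies in $K_r$, hence is adjacent in $K_r\vee G$ to every other vertex, so $N[a] = V(K_r\vee G)$ and $S$ is a dominating set regardless of $B$. The number of such sets of size $i$ is $\sum_{j\geq 1}\binom{r}{j}\binom{n}{i-j}$, whose generating function factors as $\bigl((1+x)^r-1\bigr)(1+x)^n$.

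For the case $A=\emptyset$, so $S = B\subseteq V(G)$: the point to check is that for $v\in V(G)$ one has $N_{K_r\vee G}[v]\cap V(G) = N_G[v]$, while each vertex of $K_r$ is dominated by any nonempty subset of $V(G)$. Hence a nonempty $B\subseteq V(G)$ dominates $K_r\vee G$ exactly when it dominates $G$, and since a dominating set is nonempty when $n\geq 1$, the contribution of this case is precisely $D(G,x)$. Summing the two contributions yields $D(K_r\vee G,x) = \bigl((1+x)^r-1\bigr)(1+x)^n + D(G,x)$, as claimed. Alternatively, one can induct on $r$ using $K_r\vee G = K_1\vee(K_{r-1}\vee G)$ together with the single-universal-vertex identity $D(K_1\vee H,x) = x(1+x)^{|V(H)|} + D(H,x)$, and sum the resulting geometric series in $(1+x)$.

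There is no real obstacle here: once the neighbourhood identity $N_{K_r\vee G}[v]\cap V(G) = N_G[v]$ is noted, the argument is a short case split. The only things to be careful about are the disjointness and exhaustiveness of the two cases, so that no dominating set is counted twice or missed, and the trivial edge consideration that the empty set is excluded as a dominating set, which is exactly what lets the $A=\emptyset$ case collapse cleanly to $D(G,x)$.
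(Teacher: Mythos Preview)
Your argument is correct: the bipartition of dominating sets by whether they meet $V(K_r)$, together with the observation that $N_{K_r\vee G}[v]\cap V(G)=N_G[v]$ for $v\in V(G)$, cleanly yields the formula (note the paper's ``$D(G_2,x)$'' is a typo for $D(G,x)$). The paper itself does not prove this lemma---it is quoted from Alikhani and Peng \cite{2014Intro} and closed with a \textsc{qed} box---so there is no in-paper proof to compare against; your direct counting is the standard route, and the inductive alternative you sketch via $D(K_1\vee H,x)=x(1+x)^{|V(H)|}+D(H,x)$ is equally valid.
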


\begin{lemma}
\label{lem:OptGreatG}
If a graph $G$ of order $n$ and size $m \geq n-1$ is optimal then $G$ is of the form $K_r \vee H$, the join of $K_r$ and $H$, where $0 \leq r \leq n$ and $H$ is optimal on $n-r$ vertices and at most $n-r-2$ edges.
\end{lemma}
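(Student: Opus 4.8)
The plan is to take $r$ to be the number of universal vertices of $G$, write $G$ as the corresponding join $K_r \vee H$, and then invoke the optimality of $G$ twice: once to force $r$ to be as large as possible (which determines $|E(H)|$), and once to transfer optimality down to $H$ via the join formula of Lemma~\ref{lem:graphjoin}. We may assume $m < \binom{n}{2}$, the case $G = K_n$ being immediate.

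First I would set $r := d(G,1)$, which by Lemma~\ref{lem:d(g,1)} equals the number of vertices of degree $n-1$ in $G$. Let $U$ be this set of universal vertices: $U$ induces a clique and is completely joined to the rest of $G$, so $G = K_r \vee H$ where $H := G - U$ has $n-r$ vertices and (since a universal vertex of $H$ would be universal in $G$) no universal vertex. By Observation~\ref{obs:Coeff}, optimality of $G$ yields $d(G,1) \ge d(G',1)$ for every $G' \in \mathcal{G}_{n,m}$, so $r$ is the maximum possible number of universal vertices in a graph of order $n$ and size $m$. A graph with $s$ universal vertices has at least $f(s) := \binom{s}{2} + s(n-s)$ edges; since $f$ is strictly increasing on $\{0,1,\dots,n-1\}$ with $f(s+1) - f(s) = n-s-1$, and since $m < \binom{n}{2} = f(n-1)$ forces $r \le n-2$, maximality of $r$ gives $f(r) \le m < f(r) + (n-r-1)$. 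Counting edges in $K_r \vee H$ gives $|E(H)| = m - f(r)$, hence $|E(H)| \le n-r-2$. (Also $m \ge n-1 = f(1)$ ensures $r \ge 1$, so the decomposition is nontrivial.)

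It remains to prove $H$ is optimal in $\mathcal{G}_{n-r,\,|E(H)|}$. If not, there is a graph $H'$ of the same order and size and a value $x_0 > 0$ with $D(H',x_0) > D(H,x_0)$. Then $G' := K_r \vee H'$ belongs to $\mathcal{G}_{n,m}$, and by Lemma~\ref{lem:graphjoin} the copies of $((1+x)^r - 1)(1+x)^{n-r}$ occurring in $D(G,x)$ and in $D(G',x)$ cancel, leaving $D(G',x) - D(G,x) = D(H',x) - D(H,x)$; evaluating at $x_0$ contradicts the optimality of $G$. Hence $H$ is optimal, completing the argument.

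I expect the only real content to lie in the second paragraph — recognizing that optimality of $G$ must maximize the number of universal vertices, and then the short but slightly fussy edge count that converts this into $|E(H)| \le n-r-2$. Once the join formula of Lemma~\ref{lem:graphjoin} is in hand, the transfer of optimality to $H$ in the third paragraph is essentially automatic, and this is exactly what makes the lemma a useful first step toward a recursive description of the optimal dense graphs.
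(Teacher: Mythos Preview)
Your proof is correct and follows essentially the same approach as the paper: define $r$ as the (necessarily maximal) number of universal vertices via Lemma~\ref{lem:d(g,1)} and Observation~\ref{obs:Coeff}, deduce the edge bound on $H$ from that maximality, and transfer optimality to $H$ using the join formula of Lemma~\ref{lem:graphjoin}. Your edge-count via $f(s)=\binom{s}{2}+s(n-s)$ makes explicit what the paper states tersely (``Therefore $H$ has at most $n-r-2$ edges''), but the substance is identical.
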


\begin{proof}
By Lemma \ref{lem:d(g,1)} and Observation \ref{obs:Coeff} we wish to maximize the number of degree $n-1$ vertices. Let $r$ be the maximum number of degree $n-1$ vertices $G$ could have with $m$ edges and $n$ vertices. Clearly $0 \leq r \leq n$, $G = K_r \vee H$, and $H$ has $n-r$ vertices. Furthermore $H$ has no degree $n-r-1$ vertices, otherwise such vertices would be degree $n-1$ in $G$. Therefore $H$ has at most $n-r-2$ edges.

Finally we show $H$ is optimal on $n-r$ vertices and $m_H \leq n-r-2$ edges. Let  $H'$ be any another graph of equal order and size to $H$. As $G$ is optimal, $D(G,x)=D(K_r \vee H,x) \geq D(K_r \vee H',x)$ for all $x > 0$. By Lemma \ref{lem:graphjoin}, 

\begin{eqnarray}
D(K_r \vee H,x) & = & ((1+x)^{r}-1)(1+x)^{n-r} + D(H,x) \nonumber \\
D(K_r \vee H',x) & = & ((1+x)^{r}-1)(1+x)^{n-r} + D(H',x) \nonumber
\end{eqnarray}

\noindent Thus $D( H,x) \geq D(H',x)$ for all $x > 0$ and $H$ is optimal.
\end{proof}

\begin{theorem}
For $n \geq 6$ vertices and $n-1 \leq m < {n \choose 2}-6$ there does not exist an optimal graph for the domination polynomial.
\end{theorem}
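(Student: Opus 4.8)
The plan is to argue by contradiction: use Lemma~\ref{lem:OptGreatG} to reduce a putative optimal $G$ to a short explicit list of candidates, and then rule out each candidate by exhibiting a graph that beats it for large $x$.

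Suppose $G$ is optimal of order $n\ge 6$ and size $m$ with $n-1\le m<\binom{n}{2}-6$. By Lemma~\ref{lem:OptGreatG}, $G=K_r\vee H$ with $H$ optimal on $n'=n-r$ vertices and $m_H\le n'-2$ edges. Two bookkeeping remarks: since $K_0\vee H=H$ has only $m_H\le n'-2=n-2<n-1$ edges, necessarily $r\ge 1$; and counting edges across the join gives $m=\binom{n}{2}-\binom{n'}{2}+m_H$, so $m<\binom{n}{2}-6$ is equivalent to $\binom{n'}{2}-m_H>6$, which with $m_H\ge 0$ forces $\binom{n'}{2}\ge 7$, i.e.\ $n'\ge 5$. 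Now split on the size of $H$: if $\lceil n'/2\rceil<m_H\le n'-2$, then, since $n'\ge 5\ge 4$ and $m_H\le n'-1$, Theorem~\ref{thm:0.5nton} says no optimal graph of order $n'$ and size $m_H$ exists, contradicting the optimality of $H$. So $m_H\le\lceil n'/2\rceil$, and then Corollary~\ref{sparesopt} and Theorem~\ref{sparsmost} pin $H$ down: it is a disjoint union of copies of $K_2$ together with either some isolated vertices or (only when $n'$ is odd and $m_H=\lceil n'/2\rceil$) exactly one copy of $K_{1,2}$. In particular $\delta(H)\le 1$ and $H$ has many vertices of degree $\delta(H)$.

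It remains to beat $G=K_r\vee H$. First, $G$ is not beaten by any graph with $r$ universal vertices: such a graph is $K_r\vee H'$ for some $H'$ of order $n'$ and size $m_H$, and by Lemma~\ref{lem:graphjoin}, $D(K_r\vee H',x)-D(K_r\vee H,x)=D(H',x)-D(H,x)\le 0$ since $H$ is optimal; and, $r$ being the largest number of universal vertices attainable by a graph of order $n$ and size $m$, no competitor has more than $r$. Hence a competitor that beats $G$ has fewer than $r$ universal vertices, so by Observation~\ref{obs:Coeff} (comparing the coefficient of $x$) it loses to $G$ for small $x$, and $G$ can fail to be optimal only by being beaten for large $x$. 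I would therefore ``demote'' a universal vertex: set $G^*=K_{r-1}\vee H^*$, where $H^*$ has $n'+1$ vertices and $m_H+n'$ edges and no universal vertex (an edge count one checks makes $G^*$ have order $n$, size $m$, and exactly $r-1$ universal vertices). The slack in choosing $H^*$ is the crux: since $m_H+n'$ is comfortably large relative to $n'+1$, one can take $\delta(H^*)\ge\delta(H)+1$, so that $\delta:=\delta(G)=r+\delta(H)$ satisfies $\delta(G^*)=(r-1)+\delta(H^*)\ge\delta$, and moreover one can insist $H^*$ has only a bounded number of vertices of minimum degree (hence only a bounded number of distinct closed neighbourhoods $N[v]$ over minimum-degree vertices). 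By Lemma~\ref{lem:MinDeg} the coefficient of $x^{n-j}$ in $D(G,x)$ and in $D(G^*,x)$ both equal $\binom{n}{j}$ for $j\le\delta$, while by Lemma~\ref{Lem:d(G,n-d-1)} the coefficient of $x^{n-\delta-1}$ equals $\binom{n}{\delta+1}$ minus the number of closed neighbourhoods of minimum-degree vertices. Since $H$ is one of the sparse graphs above, $G$ has many such neighbourhoods while $G^*$ has few; thus $d(G^*,n-\delta-1)>d(G,n-\delta-1)$, and Observation~\ref{obs:Coeff} gives $D(G^*,x)>D(G,x)$ for large $x$, a contradiction. (When $H$ has an isolated vertex but also at least one edge one can take $\delta(H^*)=2$, so $\delta(G^*)>\delta$ and the strict inequality at $x^{n-\delta-1}$ is automatic from Lemma~\ref{lem:MinDeg}.)

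The step I expect to be the real obstacle is this last comparison when $n'$ is small, namely $n'=5$ or $n'=6$, where the numbers of minimum-degree closed neighbourhoods of $G$ and of $G^*$ are close and crude bounds do not separate them. For example, when $H=3K_2$ (so $n'=6$) the graph $G$ has exactly $3$ such neighbourhoods, and to win one must produce $H^*$ on $7$ vertices and $9$ edges, of minimum degree $2$, in which two of its (forced) three degree-$2$ vertices are adjacent and share their third neighbour, giving $G^*$ only $2$. Producing this short list of explicit small graphs $H^*$ and checking the routine bookkeeping --- that $G^*$ has the prescribed order and size, and that the top $\delta+1$ coefficients of $D(G,x)$ and $D(G^*,x)$ genuinely coincide --- completes the argument.
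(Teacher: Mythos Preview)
Your overall strategy matches the paper's exactly: reduce via Lemma~\ref{lem:OptGreatG} to $G=K_r\vee H$ with $H$ one of the sparse optimal graphs from Corollary~\ref{sparesopt} or Theorem~\ref{sparsmost} (the case $m_H>\lceil n'/2\rceil$ being killed by Theorem~\ref{thm:0.5nton}), then produce a competitor agreeing with $G$ on the top $\delta(G)$ coefficients but beating it at $x^{n-\delta(G)-1}$, via Lemma~\ref{Lem:d(G,n-d-1)} and Observation~\ref{obs:Coeff}. The only substantive difference is how the competitor is built. You demote one universal vertex wholesale and rebuild the non-clique part as a fresh graph $H^*$ on $n'+1$ vertices; the paper instead performs small explicit edge swaps inside $G$ itself (for instance, when $H$ has an isolated vertex it deletes a single edge $vx$ from a universal $x$ to some $v\in V(H)$ and reattaches it as $uv$ with $u$ isolated; when $H=m_HK_2$ it swaps four edges; when $H=(m_H-2)K_2\cup K_{1,2}$ it swaps two). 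These local swaps make the inequality $|M_{G'}|<|M_G|$ immediate in each case without any side conditions on $n'$, so the paper never needs the small-$n'$ verifications you correctly flag as the residual obstacle in your construction. Your approach is sound and would go through once those finitely many $H^*$ are exhibited, but the paper's edge-swap construction is the cleaner way to close the argument.
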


\begin{proof}
To show a contradiction suppose a graph $G$ of order $n$ and size $m$ is optimal. By Lemma \ref{lem:OptGreatG}, $G$ is the join of $K_r$ and $H$ for some $r \geq 0$ and optimal graph $H$ with $n-r$ vertices and at most $n-r-2$ edges. Let $m_H$ be the number of edges in $H$; then $m = m_H + {{r} \choose {2}} + r(n-r) \geq {{r} \choose {2}} + r(n-r)$. Let $M_G=\{N_G[v]:\deg_G (v)=\delta(G)\}$. Thus it is sufficient to give another graph $G'$, of equal order and size, with $|M_G|>|M_{G'}|$ as Lemma \ref{Lem:d(G,n-d-1)} and Observation \ref{obs:Coeff} imply $D(G',x)>D(G,x)$ for arbitrarily large values of $x$.

We consider the following three cases: $m_H < \ceil{\frac{n-r}{2}}$, $m_H = \ceil{\frac{n-r}{2}}$, and $m_H > \ceil{\frac{n-r}{2}}$. 

\vspace{0.1in}
\noindent \emph{Case 1}:
$m_H < \ceil{\frac{n-r}{2}}$. 

\vspace{0.1in}
\noindent In this case, $H$ is an optimal graph on $n-r$ vertices and less than $\ceil{\frac{n-r}{2}}$ edges. Using Theorem \ref{sparsmost}, $H$ must be the following optimal graph

 $$H=m_HK_2 \cup (n-r-2m_H)K_1.$$

Note that $n-r-2m_H>0$, so $\delta(G)=r$. Furthermore no two vertices of degree $r$ are adjacent. Therefore 

$$|M_G|=|\{v \in V:\deg (v)=r\}|=n-r-2m_H.$$

As $n-1 \leq m < {n \choose 2}-6$ and $m \geq {{r} \choose {2}} + r(n-r)$, it follows that $1 \leq r < n-4$ and hence $|H| = n-r > 4$. Let $u$ be a vertex of minimum degree in $G$, $v$ be any other vertex in $H$, and $x$ be a universal vertex in $G$. Further, let $G'$ be the graph formed by replacing the edge $vx$ in $G$ with the edge $uv$. The graphs $G$ and $G'$ have the same size, order and $\delta(G') \geq \delta(G)$. If $\delta(G') > \delta(G)$, then $d(G',n-r-1)={n \choose r+1}>{n \choose r+1}-(n-r-2m_H)$ and hence we get a contradiction. Thus $\delta(G') = \delta(G)=r$. Every vertex in $G'$, other than $x$ and $u$, has the same degree as they did in $G$. Furthermore $\deg_{G'}(x)=n-2>r$ and $\deg_{G'}(u)=\deg_{G}(u)+1=r+1$. Therefore $G'$ has $n-r-2m_H-1$ vertices of degree $r$. Since $|M_{G'}| \leq |\{v \in V:\deg_{G'} (v)=r\}|$, $|M_G|>|M_{G'}|$.

\vspace{0.1in}

\noindent \emph{Case 2}:
$m_H = \ceil{\frac{n-r}{2}}$.

\vspace{0.1in}
\noindent $H$ has $n-r$ vertices and is optimal. By Corollary \ref{sparesopt}, $H=m_HK_2$ is uniquely optimal if $n-r$ is even and $H=(m_H-2)K_2 \cup K_{1,2}$ is uniquely optimal if $n-r$ is odd. Also $\delta(G) = r+1$, regardless of parity.

\vspace{0.1in}
\hfill\begin{minipage}{\dimexpr\textwidth-1cm}
\emph{Case 2a}: $n-r$ is even.

\vspace{0.1in}
Then $n-r \geq 6$ and without loss of generality let $G = K_r \vee H$ where $H=m_HK_2$ with $m_H \geq 3$. Note that the vertices of degree $r+1$ are exactly the vertices of $H$ and each degree $r+1$ vertex in $H$ shares its closed neighbourhood with its only neighbour in $H$. Therefore $|M_G|=m_H$.

\vspace{0.1in}

Let $u_1,u_2,v_1, v_2$ and $x$ be vertices in $G$ such that $x$ is a universal vertex in $G$ and $u_1,u_2$ and $v_1, v_2$ each induce $K_2$ components in $H$. Note $N_G[u_1] =N_G[u_2] \in M_G$ and $N_G[v_1] =N_G[v_2] \in M_G$. Let $G'$ be the graph formed by replacing the edges $xu_1$, $xu_2$, $xv_1$ and $xv_2$ with $v_1u_1$, $v_1u_2$, $v_2u_1$ and $v_2u_2$. Note the degree of $u_1,u_2,v_1$ and  $v_2$ have all increased from $G$ to $G'$, $\deg_{G'}(x)=\deg_{G}(x)-4=n-5 \geq r+1$, and the closed neighbourhood of every other vertex is unchanged. Therefore $M_{G'} \subseteq M_{G} \cup \{N_{G'}[x]\} -\{N_G[u_1],N_G[v_1]\}$ and hence $|M_G|>|M_{G'}|$.

\xdef\tpd{\the\prevdepth}
\end{minipage}

\vspace{0.1in}

\vspace{0.1in}
\hfill\begin{minipage}{\dimexpr\textwidth-1cm}
\emph{Case 2b}: $n-r$ is odd. 

\vspace{0.1in}

Then $n-r \geq 5$ and without loss of generality let $G = K_r \vee H$ where $H=(m_H-2)K_2 \cup K_{1,2}$ with $m_H-2 \geq 1$. Let $u_1,u_2,v$ and $x$ be vertices in $G$ such that $x$ is a universal vertex in $G$, $u_1,u_2$ induce a $K_2$ component in $H$ and $v$ is a leaf in the $K_{1,2}$ component of $H$. Note $N_G[u_1] =N_G[u_2] \in M_G$ and $N_G[v] \in M_G$. Let $G'$ be the graph formed by replacing the edges $xu_1$ and $xu_2$ with $vu_1$, $vu_2$. The degree of $u_1$ and $u_2$ remain $r+1$ and $N_{G'}[u_1] =N_{G'}[u_2] \in M_{G'}$. Furthermore $\deg_{G'}(x)=\deg_{G}(x)-2=n-3 > r+1$ and $\deg_{G'}(v)=\deg_{G}(v)+2=r+3$ so $N_{G'}[v] \notin M_{G'}$. As the closed neighbourhood of every other vertex is unchanged, $|M_G|>|M_{G'}|$.

\xdef\tpd{\the\prevdepth}
\end{minipage}

\vspace{0.1in}

\noindent \emph{Case 3}:
$m_H > \ceil{\frac{n-r}{2}}$. 

\vspace{0.1in}
By Lemma \ref{lem:OptGreatG}, $H$ is an optimal graph on $n-r$ vertices and $m_H > \ceil{\frac{n-r}{2}}$ edges, where $m_H \leq n-r-2 < n-r-1$. As in case 1, $n-r \geq 5$. By Theorem \ref{thm:0.5nton}, there is no optimal graph on $n-r$ vertices and $m_H > \ceil{\frac{n-r}{2}}$ edges. Thus this case is a contradiction. 
\end{proof}

Clearly for $m={n \choose 2}$ and $m={n\choose 2}-1$, unique optimal graphs exist, since there is only one graph in each case, but we now show for other dense graphs optimal graphs do not exist.

\begin{theorem}
\label{optimalk}
Let $G$ be a graph on $n\geq 6$ vertices and $m={n \choose 2}-k$, $2\leq k\leq 6$ Then an optimal graph does not exist.
\end{theorem}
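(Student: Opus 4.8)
The plan is to parallel the structure of the preceding theorem, using the complement in a spirit dual to the join decomposition. Suppose for contradiction that $G$ on $n$ vertices with $m = \binom{n}{2}-k$ edges is optimal, where $2 \leq k \leq 6$. Since $m \geq n-1$ for $n \geq 6$, Lemma~\ref{lem:OptGreatG} applies, so $G = K_r \vee H$ with $H$ optimal on $n-r$ vertices and at most $n-r-2$ edges. The complement $\overline{G}$ has exactly $k$ edges (with $2 \leq k \leq 6$) and its non-isolated vertices all lie in the copy of $H$; in fact $\overline{H}$ (within the $n-r$ vertices) carries all $k$ edges, and since $H$ has at most $n-r-2$ edges, $\overline{H}$ has at least one edge, forcing $r \leq n - 2 - (\text{something})$. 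The key point is that there are only finitely many graphs with at most $6$ edges and no isolated vertices, so $\overline{H}$ ranges over a short explicit list: disjoint unions of paths, stars, small cycles, $K_3$, $K_4$ minus an edge, etc., on at most $2k \leq 12$ non-isolated vertices.

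First I would reduce to classifying the optimal $H$ directly. Writing $\ell$ for the number of edges of $H$, we have $\ell = \binom{n-r}{2} - k$, and the constraint $\ell \leq n-r-2$ combined with $n-r \geq 3$ pins $n-r$ into a bounded range for each $k$ (roughly $n - r \leq 5$ or $6$ depending on $k$), or else $H$ is itself dense in the sense of a previous theorem and we recurse. More precisely: if $\binom{n-r}{2} - k > \lceil \frac{n-r}{2}\rceil$ and $\binom{n-r}{2}-k < \binom{n-r}{2}-6$, i.e. $k > 6$, the earlier theorem kills it, but here $k \leq 6$ so that escape is closed; instead $\binom{n-r}{2} - k$ must be small, which bounds $n-r$. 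So the surviving possibilities are a finite checklist of pairs $(n-r, H)$, and for each I would exhibit a competitor $H'$ of the same order and size that beats $H$ on the relevant coefficient — using Lemma~\ref{Lem:d(G,n-d-1)} and the count $|\{N[v] : \deg v = \delta\}|$ for the high-$x$ regime, or Lemma~\ref{lem:MinDeg} / the $d(G,1)$ and $d(G,2)$ coefficients for the low-$x$ regime, invoking Observation~\ref{obs:Coeff} to conclude non-optimality. Then $G = K_r \vee H$ inherits non-optimality via the additive structure of Lemma~\ref{lem:graphjoin}, exactly as in Lemma~\ref{lem:OptGreatG}'s argument run backwards.

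Concretely, for each fixed small $n-r$ (say $n-r \in \{3,4,5,6,7\}$ depending on $k$) and each $k$ with $2 \leq k \leq 6$ such that $\binom{n-r}{2}-k \leq n-r-2$, I would list the candidate optimal graphs $H$ on $n-r$ vertices with $\binom{n-r}{2}-k$ edges, compute $\delta(H)$ and the multiset $\{N_H[v]:\deg_H v = \delta(H)\}$, and compare against an alternative; the alternative of choice is typically the graph obtained by redistributing the $k$ missing edges so that either the minimum degree rises (making $d(\cdot, n-\delta-1) = \binom{n}{\delta+1}$, the maximum possible) or, if minimum degree is forced, so that the number of distinct closed neighbourhoods among minimum-degree vertices strictly increases. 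For instance when $\overline{H}$ is a matching versus a star on the same vertex set, the closed-neighbourhood counts differ, echoing the $P_n$ versus $K_{1,n-1}$ comparison already used in Theorem~\ref{thm:0.5nton}.

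The main obstacle I anticipate is the case analysis bookkeeping: unlike $k > 6$, the band $2 \leq k \leq 6$ cannot be dispatched by a single prior theorem, so one genuinely must handle each small complement-graph shape, and some borderline configurations (e.g. $k=6$ with $\overline{H} = K_4$, or $\overline{H}$ a $6$-cycle, or two triangles) may require checking a second coefficient when the first one ties. Ensuring that the chosen competitor $H'$ actually has no isolated vertices issue, stays within $m_{H} \leq n-r-2$ after re-embedding, and that the inequality is strict for all $x>0$ (not just asymptotically) is where care is needed; I would lean on Observation~\ref{obs:Coeff}'s two one-sided conclusions and note that for dense graphs the decisive coefficient is the top one governed by $\delta$, so the high-$x$ comparison via Lemma~\ref{Lem:d(G,n-d-1)} should suffice in almost every sub-case, with the low-$x$ comparison reserved for the few ties.
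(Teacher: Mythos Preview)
Your plan has a genuine gap. You intend to write the putative optimal $G$ as $K_r \vee H$ via Lemma~\ref{lem:OptGreatG} and then reach a contradiction by showing that $H$ is \emph{not} optimal among graphs of its order and size. But for the values of $k$ in question this fails outright: for $k=2$, the maximum number of universal vertices forces $r=n-3$, so $H$ has $3$ vertices and $\binom{3}{2}-2=1$ edge, and the unique such graph $K_2\cup K_1$ is trivially optimal. There is no competitor $H'$ to exhibit. The same phenomenon recurs for the other small $k$: the graph $H$ lands in the sparse regime already covered by Theorem~\ref{sparsmost} or Corollary~\ref{sparesopt}, where an optimal graph \emph{does} exist. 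Lemma~\ref{lem:OptGreatG} is only a necessary condition, and here it is satisfied; the contradiction has to come from elsewhere.

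The missing idea is that the graph which beats the candidate $G=K_r\vee H$ for large $x$ must have a \emph{different} (smaller) number of universal vertices, so the additive join formula of Lemma~\ref{lem:graphjoin} does not reduce the comparison to one between $H$ and some $H'$. The paper's argument is much more direct: for each $k$ it names the unique graph with the most universal vertices (this is the only possible winner near $x=0$), computes its domination polynomial explicitly, and then compares it against $H_k=K_n$ minus a $k$-matching, which satisfies $d(H_k,i)=\binom{n}{i}$ for every $i\geq 2$ and therefore wins for large $x$ by Observation~\ref{obs:Coeff}. Since the two graphs differ, no optimal graph exists. Your machinery of bounding $n-r$ and case-splitting on $\overline{H}$ is unnecessary once you see that the decisive comparison is between two explicit graphs on $n$ vertices, not between two candidates for $H$.
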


\begin{proof}
By Observation~\ref{obs:Coeff} we know that ane optimal graph for values of $x$ close to 0 will have the most number of universal vertices. For $k=2,3,4,5,6$ we will show that the graph $H_k$ which is $K_n$ with a matching of size $k$ removed is optimal for larger values of $x$. It is easy to see that $D(H_k,x)=(1+x)^n-1-2kx$. Note that $d(H_k,i)={n \choose i}$ for $i\geq 2$.

\begin{itemize}
\item 
For $k=2$ consider the graph $G$ which is $K_n$ with the edges of a $P_3$ removed. The domination polynomial for this graph is $D(G,x)=(1+x)^n-1-(x^2+3x)$. This is the unique graph of order $n$ and $m={n \choose 2}-2$ with $n-3$ universal vertices.

\item For $k=3$ let $G$ be a $K_n$ with the edges of a $K_3$ removed, which has a domination polynomial of $D(G,x)=(1+x)^n-1-(3x^2+3x)$. The graph $G$ is the unique graph with $n-3$ universal vertices. 

\item For $k=4$ there are two graphs of order $n$ and size $m = m={n \choose 2}-k$ on with $n-4$ universal vertices, $G$, namely $K_n$ with the edges of a $C_4$ removed and $K_n$ with the edges of a $K_3$ with a leaf removed. We can compute that $D(G,x)=(1+x)^n-1-(2x^2+4x)$ and $D(G',x)=(1+x)^n-1-(x^3+3x^2+4x)$. It is easy to see that $D(G,x)\geq D(G',x)$ for $x \geq 0$. 

\item For $k=5$ consider the graph $G$ which is $K_n$ with the edges of a $K_4$ with an edge removed. The domination polynomial for this graph is $D(G,x)=(1+x)^n-1-(2x^3+6x^2+4x)$. This is the unique graph of order $n$ and $m={n \choose 2}-5$ with $n-4$ universal vertices. 

\item Lastly, for $k=6$ consider the graph $G$ which is $K_n$ with the edges of a $K_4$ removed. The domination polynomial for this graph is $D(G,x)=(1+x)^n-1-(4x^3+6x^2+4x)$. This is the unique graph of order $n$ and size $m={n \choose 2}-6$ with $n-4$ universal vertices. 
\end{itemize}

Clearly for all the above described graphs $d(G,i)= {n\choose i}$ for $i=4,\ldots n$, but $d(G,i) < {n \choose 2}$ for some $i \in \{2,3\}$. For the largest value of $i$ where $d(G,i)<{n \choose i}$ it is the case that $d(H_k,i)={n \choose i}$, therefore by Observation~\ref{obs:Coeff} $H_k$ is optimal for arbitrarily large values of $x$, hence an optimal graph does not exist.
\end{proof}


\begin{corollary}
\label{cor:compile}
For graphs of order $n\geq 6$,

\begin{itemize}
\item $mK_2 \cup rK_1$ where $r = n-2m$ is uniquely optimal when $m < \lceil \frac{n}{2} \rceil$.
\item $mK_2$ is uniquely optimal when $n$ is even and $m = \lceil \frac{n}{2} \rceil$.
\item $(m-2)K_2 \cup K_{1,2}$ is uniquely optimal when $n$ is odd and $m = \lceil \frac{n}{2} \rceil$.
\item No optimal graph exists for $\lceil \frac{n}{2} \rceil <m  < {n \choose 2}-1$.
\item $K_n-e$ is uniquely optimal for $m = {n \choose 2}-1$, for $e\in E(G)$.
\item $K_n$ is uniquely optimal for $m = {n \choose 2}$.
\end{itemize}

\QED
\end{corollary}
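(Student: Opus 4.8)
The plan is to assemble the statement directly from the results proved above; the only real work is to check that the cited theorems, under the single hypothesis $n\geq 6$, cover precisely the stated ranges of $m$ with no gaps.

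First I would dispatch the sparse cases. For $1\leq m<\lceil\frac{n}{2}\rceil$, Theorem~\ref{sparsmost} (applied with $r=n-2m\geq 1$) gives that $mK_2\cup rK_1$ is the unique optimal graph, and the degenerate case $m=0$ is immediate since $nK_1$ is the only graph in $\mathcal{G}_{n,0}$. For $m=\lceil\frac{n}{2}\rceil$, Corollary~\ref{sparesopt} gives that the unique optimal graph is $mK_2$ when $n$ is even and $(m-2)K_2\cup K_{1,2}$ when $n$ is odd. At the dense end, $\mathcal{G}_{n,\binom{n}{2}}$ consists of $K_n$ alone and $\mathcal{G}_{n,\binom{n}{2}-1}$ consists of $K_n-e$ alone, so those graphs are trivially (uniquely) optimal.

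The substance of the compilation is the nonexistence range $\lceil\frac{n}{2}\rceil<m<{n\choose 2}-1$, which I would cover by stitching together three intervals: Theorem~\ref{thm:0.5nton} rules out optimal graphs for $\lceil\frac{n}{2}\rceil<m\leq n-1$; the theorem preceding Theorem~\ref{optimalk} rules them out for $n-1\leq m<{n\choose 2}-6$; and Theorem~\ref{optimalk}, applied with $m={n\choose 2}-k$ for each $k\in\{2,3,4,5,6\}$, rules them out for ${n\choose 2}-6\leq m\leq{n\choose 2}-2$. These intervals overlap at the endpoints $m=n-1$ and $m={n\choose 2}-6$, so their union is exactly $\lceil\frac{n}{2}\rceil<m\leq{n\choose 2}-2$, i.e. $\lceil\frac{n}{2}\rceil<m<{n\choose 2}-1$. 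It then remains only to confirm $n\geq 6$ suffices for every invocation: Theorem~\ref{thm:0.5nton} requires $n\geq 4$, while the preceding theorem and Theorem~\ref{optimalk} require $n\geq 6$, and one checks $\lceil\frac{n}{2}\rceil\leq n-1\leq{n\choose 2}-6$ for all $n\geq 6$, so the three ranges genuinely meet.

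Since no new argument is needed, there is no real obstacle; the one point to be careful about is the endpoint bookkeeping — making sure no value of $m$ slips between the three nonexistence results, in particular that Theorem~\ref{optimalk} reaches down to exactly ${n\choose 2}-6$ (the case $k=6$) and up to ${n\choose 2}-2$ (the case $k=2$), so that it abuts the interval supplied by the preceding theorem with no gap, and that the parity split at $m=\lceil\frac{n}{2}\rceil$ is exactly the one recorded in Corollary~\ref{sparesopt}.
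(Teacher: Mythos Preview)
Your proposal is correct and matches the paper's approach: the corollary is stated with only a \QED\ in the paper, so it is intended as a direct compilation of Theorem~\ref{sparsmost}, Corollary~\ref{sparesopt}, Theorem~\ref{thm:0.5nton}, the unnumbered theorem preceding Theorem~\ref{optimalk}, and Theorem~\ref{optimalk}, together with the trivial cases $m=\binom{n}{2}$ and $m=\binom{n}{2}-1$. Your endpoint bookkeeping (in particular checking that for $n\geq 6$ the three nonexistence intervals abut at $m=n-1$ and $m=\binom{n}{2}-6$) is exactly the verification the paper leaves implicit.
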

In fact, via some calculations, Corollary \ref{cor:compile} can been seen to hold for $n < 6$ as well, with the exception of $K_1 \vee 2K_2$ which is the unique optimal graph on five vertices and six edges.

\section{Conclusion}
\label{sec:Conclusion}

In \cite{domrel} the domination reliability polynomial was defined as follows. For a given graph $G$ we assume that vertices are independently operational with probability $p \in [0,1]$; the {\em domination reliability} $Drel(G,p)$ of $G$ is the probability that the operational vertices form a  dominating set of the graph. As for all-terminal reliability, the existence of optimal reliability polynomials is an open area of study. Noting that $Drel(G,p) = (1-p)^{n}\cdot D(G,\frac{p}{1-p})$, from Corollary~\ref{cor:compile} we obtain a complete characterization of values of $n$ and $m$ for which optimal graphs exist for domination reliability.

\begin{corollary}\label{domrel}
For $n\geq 6$ and $m\leq \lceil \frac{n}{2}\rceil$ uniquely optimal graphs exist for domination reliability. For  $\lceil \frac{n}{2} \rceil <m  < {n \choose 2}-1$ optimal graphs do not exist for domination reliability. For  ${n \choose 2}-1 \geq m  \geq {n \choose 2}$ uniquely optimal graphs exist for domination reliability.
\QED
\end{corollary}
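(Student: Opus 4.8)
The plan is to transfer the complete classification in Corollary~\ref{cor:compile} across the substitution identity $Drel(G,p) = (1-p)^{n}\, D(G,\tfrac{p}{1-p})$. First I would record the elementary facts about this change of variable: the map $\phi(p) = \tfrac{p}{1-p}$ is a strictly increasing bijection of $[0,1)$ onto $[0,\infty)$ with $\phi(0)=0$, and the factor $(1-p)^{n}$ is strictly positive on $[0,1)$. Consequently, for two graphs $G,H$ of the same order $n$ and any $p\in[0,1)$, we have $Drel(G,p) \geq Drel(H,p)$ if and only if $D(G,\phi(p)) \geq D(H,\phi(p))$, and the same equivalence holds with each $\geq$ replaced by $>$. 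Since $\phi$ carries $(0,1)$ onto all of $(0,\infty)$, the statement ``$Drel(G,p)\geq Drel(H,p)$ for all $p\in[0,1)$'' is equivalent to ``$D(G,x)\geq D(H,x)$ for all $x\geq 0$,'' and likewise for the strict comparison on the respective open intervals.

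Next I would dispose of the endpoint $p=1$. When $p=1$ every vertex is operational, so $V(G)$ is a dominating set and $Drel(G,1)=1$ for every graph $G$ of order $n$. Hence this point contributes nothing to either the non-strict or the strict comparison, and combining this with the previous paragraph shows that a graph $G\in\mathcal{G}_{n,m}$ is optimal (respectively, uniquely optimal) for $Drel$ on $[0,1]$ precisely when $G$ is optimal (respectively, uniquely optimal) for $D$ on $[0,\infty)$. Thus the existence question for optimal domination reliability polynomials is literally the same question answered by Corollary~\ref{cor:compile}.

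With this equivalence in hand, the three assertions follow at once. For $m\leq \lceil \frac{n}{2}\rceil$, the first three bullets of Corollary~\ref{cor:compile} provide a unique optimal graph for $D$ (namely $mK_2\cup(n-2m)K_1$, or $mK_2$, or $(m-2)K_2\cup K_{1,2}$, according to the size and the parity of $n$), hence a unique optimal graph for $Drel$. For $\lceil \frac{n}{2}\rceil < m < \binom{n}{2}-1$, the fourth bullet says no optimal graph exists for $D$, hence none exists for $Drel$. For the remaining dense cases $m\in\{\binom{n}{2}-1,\binom{n}{2}\}$, the last two bullets give $K_n-e$ and $K_n$ as the unique optimal graphs for $D$, hence for $Drel$.

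The only point requiring care is the endpoint argument in the second paragraph: one must confirm that collapsing all graphs to the common value $1$ at $p=1$ cannot create a spurious tie—turning, say, a unique optimum into a non-unique one. But optimality and unique optimality are both defined through inequalities that already hold, with the needed strictness, on the open interval $(0,1)$, so a single shared endpoint value is harmless. With that verified, no genuine obstacle remains, and the corollary follows from Corollary~\ref{cor:compile}.
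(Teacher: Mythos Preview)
Your proposal is correct and takes essentially the same approach as the paper: the corollary is stated there with an immediate \QED, the entire argument being the one-line observation preceding it that $Drel(G,p) = (1-p)^{n} D(G,\tfrac{p}{1-p})$ transfers Corollary~\ref{cor:compile} directly. Your write-up simply unpacks this transfer in more detail (the bijection $[0,1)\to[0,\infty)$, positivity of $(1-p)^n$, and the harmless endpoint $p=1$), which is entirely appropriate.
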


On another note, we can ask what graphs are the {\em least}-optimal (a graph $H\in \mathcal{G}_{n,m}$ is {\em least-optimal} if $f(H,x) \leq f(G,x)$ for all graphs $G\in \mathcal{G}_{n,m}$ and {\em all} $x \geq 0$). While, of course, for $m = n-1$ or $n$ there are least-optimal graphs (as there is only a single graph in each such class), we can show that, in general, such graphs need not exist.

\begin{theorem}\label{thm:leastopt}
Let $G$ be a graph on $n\geq 7$ vertices and $m={n \choose 2}-k$, $2 \leq k \leq \frac{n}{2}$ edges. Then a least-optimal graph does not exist.
\end{theorem}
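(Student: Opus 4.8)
The plan is to pin down the unique candidate for a least-optimal graph when $x$ is small and the unique candidate when $x$ is large, and then observe that for $k\geq 2$ these two graphs are different; since a least-optimal graph would have to coincide with both, none can exist.

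First I would analyze the behaviour near $0$. Writing $m=\binom{n}{2}-k$, the complement $\overline{G}$ of any $G\in\mathcal{G}_{n,m}$ has exactly $k$ edges. By Lemma~\ref{lem:d(g,1)}, $d(G,1)$ is the number of universal vertices of $G$, i.e.\ the number of isolated vertices of $\overline{G}$. Since $\overline{G}$ has $k$ edges it has at most $2k$ non-isolated vertices, so $d(G,1)\geq n-2k\geq 0$ (this is where $k\leq n/2$ enters), with equality exactly when $\overline{G}$ is a matching of size $k$ together with $n-2k$ isolated vertices. Up to isomorphism this is the single graph $H_k=K_n-kK_2$ of Theorem~\ref{optimalk}. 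Hence for every $G\not\cong H_k$ we have $d(G,1)>d(H_k,1)$, and so by Observation~\ref{obs:Coeff}, $D(G,x)>D(H_k,x)$ for all sufficiently small $x>0$. Consequently any least-optimal graph in $\mathcal{G}_{n,m}$ must be isomorphic to $H_k$.

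Next I would analyze the behaviour for large $x$. By Lemma~\ref{lem:MinDeg}, $d(G,n-j)=\binom{n}{j}$ for every $j\leq\delta(G)$, while by Lemma~\ref{Lem:d(G,n-d-1)}, $d(G,n-\delta(G)-1)=\binom{n}{\delta(G)+1}-|\{N[v]:\deg(v)=\delta(G)\}|<\binom{n}{\delta(G)+1}$. Therefore, if $\delta(G)<\delta(G')$ then, reading coefficients from the top, $G$ and $G'$ agree on all powers above $n-\delta(G)-1$ but $d(G,n-\delta(G)-1)<d(G',n-\delta(G)-1)$, so Observation~\ref{obs:Coeff} yields $D(G',x)>D(G,x)$ for large $x$. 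Thus a least-optimal graph must attain the minimum value of $\delta$ over $\mathcal{G}_{n,m}$. Since $\delta(G)=n-1-\Delta(\overline{G})$ and $\overline{G}$ has $k$ edges, $\Delta(\overline{G})\leq k$ with equality only when $\overline{G}$ is the star $K_{1,k}$ plus isolated vertices; hence $\delta$ is minimized (at $n-1-k$) exactly by the unique graph $G^{*}=K_n-K_{1,k}$, and any least-optimal graph must be isomorphic to $G^{*}$.

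Finally, $\overline{H_k}=kK_2$ and $\overline{G^{*}}=K_{1,k}$ are non-isomorphic whenever $k\geq 2$, so $H_k\not\cong G^{*}$; a least-optimal graph in $\mathcal{G}_{n,m}$ would have to be isomorphic to both, which is impossible, so no least-optimal graph exists. The coefficient comparisons are immediate from Observation~\ref{obs:Coeff} and Lemmas~\ref{lem:d(g,1)}, \ref{lem:MinDeg}, \ref{Lem:d(G,n-d-1)}; the part demanding care is the two \emph{uniqueness} assertions — that $H_k$ is the \emph{only} minimizer of $d(G,1)$ and $G^{*}$ the \emph{only} minimizer of $\delta$ — together with checking that the hypotheses $n\geq 7$ and $2\leq k\leq n/2$ actually force $H_k$ and $G^{*}$ to lie in $\mathcal{G}_{n,m}$ and to be genuinely distinct.
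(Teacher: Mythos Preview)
Your proof is correct. The near-$0$ analysis matches the paper's exactly: both pin down $H_k=K_n-kK_2$ as the unique minimizer of $d(\cdot,1)$, hence the only possible least-optimal graph.

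Where you diverge is in the large-$x$ step. The paper does not identify the large-$x$ minimizer; instead it exhibits a single competitor $K_n-P_{k+1}$, computes both domination polynomials explicitly as $(1+x)^n-1-2kx$ and $(1+x)^n-1-(k+1)x-(k-2)x^2$, and checks that the latter is smaller once $x>\frac{k-1}{k-2}$ (treating $k=2$ separately via Theorem~\ref{optimalk}, since then only two graphs exist). Your route is more structural: using Lemmas~\ref{lem:MinDeg} and~\ref{Lem:d(G,n-d-1)} you show that the large-$x$ least graph must have minimum $\delta$, then observe $\delta(G)=n-1-\Delta(\overline{G})$ and that a $k$-edge graph has $\Delta=k$ only when it is a star, isolating $G^*=K_n-K_{1,k}$ uniquely. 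This buys you a uniform argument for all $k\ge 2$ with no explicit polynomial arithmetic and no separate $k=2$ case, at the cost of invoking two extra lemmas; the paper's version is more hands-on but requires the case split and the computation.
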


\begin{proof}
By Observation~\ref{obs:Coeff} we know that the least optimal graph for values of $x$ close to 0 will have the least number of universal vertices. For $k=2$ there are only two possible graphs and so by Theorem~\ref{optimalk} an least optimal graph does not exist.
Thus we can assume $k \geq 3$.

Let $G_k$ be $K_n$ with the edges of a matching of size $k$ removed. This is the unique graph of order $n$ and size $m={n \choose 2}-k$, $2 \leq k \leq \frac{n}{2}$ that has $n-2k$ non-universal vertices, that is, it is least optimal for values of $x$ near 0. The domination polynomial for this graph is $D(G_k,x)=(1+x)^{n}-1-2kx$. Consider the graph $H$, which is $K_n$ with the edges of a $P_{k+1}$ removed. The domination polynomial for this graph is $D(H,x)=(1+x)^n-1-(k+1)x-(k-2)x^2$. Since $D(G_k,x) \leq D(H,x)$ holds if and only if $(-k+1)x+(k-2)x^2\leq 0$, which is true precisely when $x \leq \frac{k-1}{k-2}$, so outside this range (i.e. when $x \in (k-1)/(k-2),\infty)$), $H$ is less optimal. Thus a least-optimal graph does not exist.
\end{proof}

An open problem is to characterize the values of $n$ and $m$ such that least optimal graphs exist.

\vspace{0.5cm}

\noindent {\bf Acknowledgements}\\

\vspace{0.05in}
\noindent J.I. Brown acknowledges support from NSERC (grant application RGPIN 170450- 2013). D. Cox acknowledges research support from NSERC (grant application RGPIN 2017-04401).

\vspace{0.1in}



\end{document}